\newcommand{\subj}[1]{\par\noindent{\bf AMS Subject Classifications: }#1.}
\newcommand{\keyw}[1]{\par\noindent{\bf Keywords: }#1.}
\numberwithin{equation}{section}
\numberwithin{figure}{section}
\newtheorem{theorem}{Theorem}[section]
\theoremstyle{definition}
\newtheorem{definition}[theorem]{Definition}
\newtheorem{example}[theorem]{Example}
\theoremstyle{remark}
\newtheorem{remark}[theorem]{Remark}
\date{}
\newcommand{\ijde}
{\vspace{-1in}\normalsize\flushleft
International Journal of Difference Equations\\
ISSN 0973-6069, Volume 9, Number 1, pp.~{\thepage--\pageref{lastpage}} (2014)\\
{\tt http://campus.mst.edu/ijde}\\\vspace{1mm}\hrule\vspace{5mm}
\renewcommand\thefootnote{{}}
\footnotetext{\noindent\tt Received November 29, 2013; Accepted February 21, 2014\par
\hspace*{8pt}Communicated by Delfim F. M. Torres}}
\newcommand{\T}{\mathbb{T}}
\newcommand{\R}{\mathbb{R}}
\newcommand{\Z}{\mathbb{Z}}
\newcommand{\rar}{\mbox{$\rightarrow$}}
\begin{document}
\title{\ijde\center\Large\bf Noether's Theorem for Control Problems\\ on Time Scales}
\author{{\bf Agnieszka B. Malinowska} \\
        Faculty of Computer Science\\
Bialystok University of Technology\\
15-351 Bia\l ystok, Poland\\
        {\tt a.malinowska@pb.edu.pl}
\and
        {\bf Moulay Rchid Sidi Ammi}\\
        Department of Mathematics,\\
        AMNEA GROUP\\
Faculty of Sciences and Techniques,\\
Moulay Ismail University, \\
B.P 509, Errachidia, Morocco\\
        {\tt sidiammi@ua.pt}
}
\maketitle
\thispagestyle{empty}

\begin{abstract}
We prove a generalization of Noether's theorem for optimal control problems defined on time scales.
Particularly, our results can be used for discrete-time, quantum, and continuous-time optimal control problems.
The generalization involves a one-parameter family of maps which depend also on the control
and a Lagrangian which is invariant up to an addition of an exact delta differential.
We apply our results to some concrete optimal control problems on an arbitrary time scale.
\end{abstract}

\subj{49K05, 39A12}

\keyw{Time scales, optimal control, conservation laws}

%---------------------------------------------------------------------

\section{Introduction}

There are close relationships between symmetries and conserved quantities. But how to seek conserved quantities in mechanical systems? Emmy Noether first proposed the famous theorem \cite{noether,ntrans} which is usually formulated in the context of the calculus of variations: it guarantees that the invariance of a variational
integral $\int_a^bL(t,x(t),\dot{x}(t))dt$ with respect to a continuous symmetry transformations that depend on $k$ parameters implies the existence of $k$ conserved quantities along the Euler--Lagrange extremals. Noether's theorem explains conservation laws of mechanical systems: conservation of energy comes from invariance of the system under time translations;
conservation of linear momentum comes from invariance of the system under spacial translations; conservation of angular
momentum reflects invariance with respect to spatial rotations. The result is, however, much more than a theorem.
It is an universal principle, which can be formulated as a precise statement in very different contexts,
and for each such context, under different assumptions \cite{torres2006}. Typically, Noether transformations
are considered to be point-transformations (they are considered to be functions
of coordinates and time), but one can consider more general transformations
depending also on velocities \cite{gelfand,torres}.
In most formulations of Noether's principle, the Noether transformations
keep the integral functional invariant. It is possible,
however, to consider transformations of the problem that keep the invariance of the Lagrangian up to an exact differential,
called a gauge-term \cite{rund,torres2004}. Formulations of Noether's principle are also possible
for optimal control problems \cite{torres,torres1,torres2,torres2006,schaft}.

Time scale calculus is a recent mathematical theory
that unifies two existing approaches to dynamic modelling --- difference and differential equations ---
into a general framework called dynamic models on time scales.
The origins of the idea of time scales calculus date back to the late 1980's when S. Hilger introduced
this notion in his Ph.D. thesis and showed how to
unify continuous time and discrete time dynamical systems \cite{Hilger,Hilger97}. With time this unification
aspect has been supplemented by the extension and generalization features, see e.g., \cite{Agarwal,ahl,Atici06,Atici09,CD:Bohner:2004,livro,loic,MM,zeidan,zeidan2,SA}.
Noether's first theorem has been already extend to the variational calculus on time scales \cite{B:T:08,B:M:T:11}. Therefore, it is natural to ask about a generalization of Noether's theorem for the optimal control problems defined on time scales.
Here we give the answer to this question.

\section{Preliminaries}
\label{sec:prm}
For the convenience of the reader we recall some basic results and notation needed in the sequel.
For the theory of time scales we refer the reader to
\cite{Agarwal,livro,Hilger97}.

A time scale $\T$ is an arbitrary nonempty closed subset of the real
numbers $\R$. The functions $\sigma:\T \rar \T$ and $\rho:\T \rar
\T$ are, respectively, the forward and backward jump operators. The
graininess function on $\T$ is defined by $\mu(t):=\sigma(t)-t$. A point $t\in\mathbb{T}$ is called
right-dense, right-scattered, left-dense, or left-scattered, if
$\sigma(t)=t$, $\sigma(t)>t$, $\rho(t)=t$, or $\rho(t)<t$,
respectively.

Let $\mathbb{T}=[a,b]\cap\mathbb{T}_{0}$ with $a<b$ and
$\mathbb{T}_0$ is a time scale. We define
$\mathbb{T}^\kappa:=\mathbb{T}\backslash(\rho(b),b]$, and
$\mathbb{T}^{\kappa^0} := \mathbb{T}$,
$\mathbb{T}^{\kappa^n}:=\left(\mathbb{T}^{\kappa^{n-1}}\right)^\kappa$
for $n\in\mathbb{N}$.

For a function $f:\T^\kappa\rightarrow \R$ the time scale delta
derivative is denoted by $f^\Delta$ \cite{livro}.
Whenever $f^\Delta$ exists, the following formula holds:
$f^\sigma(t)=f(t)+\mu(t)f^\Delta(t)$,
where we abbreviate $f\circ\sigma$ by $f^\sigma$.
Let $f^{\Delta^{0}} = f$. We define the
$r$th-delta derivative of $f:\mathbb{T}^{\kappa^r}\rightarrow
\R$, $r\in\mathbb{N}$, to be the function $\left(f^{\Delta^{r-1}}\right)^\Delta$,
provided $f^{\Delta^{r-1}}$ is delta differentiable on $\mathbb{T}^{\kappa^r}$.

A function $f:\mathbb{T} \to \mathbb{R}$ is called rd-continuous if
it is continuous at the right-dense points in $\mathbb{T}$ and its
left-sided limits exist at all left-dense points in $\mathbb{T}$. The set of all rd-continuous functions
is denoted by $C_{\mathrm{rd}}$. Similarly, $C^r_{\mathrm{rd}}$ will denote the set of
functions with delta derivatives up to order $r$ belonging to
$C_{\mathrm{rd}}$. A function $f$ is of class $f\in C_{\mathrm{prd}}^r$ if
$f^{\Delta^i}$ is continuous for $i = 0,\ldots,r-1$, and
$f^{\Delta^r}$ exists and is rd-continuous for all, except possibly
at finitely many, $t \in \mathbb{T}^{\kappa^r}$.

A piecewise rd-continuous function $f:\mathbb{T} \to \mathbb{R}$
possess an antiderivative $F$, that is $F^{\Delta}=f$, and in this case the delta
integral is defined by $\int_{c}^{d}f(t)\Delta t=F(d)-F(c)$ for all
$c,d\in\T$. It satisfies
$$\int_t^{\sigma(t)}f(\tau)\Delta\tau=\mu(t)f(t).$$
\begin{example}
\begin{itemize}
\item[a)] If $\T=\R$,
then $\sigma(t)=t=\rho(t)$ and $\mu(t) \equiv 0$ for any $t \in
\R$. The delta derivative, $f^\Delta$, reduces to the standard derivative $f'(t)$ and $\int\limits_{a}^{b}  f(t) \Delta
t=\int\limits_{a}^{b}f(t)dt$, where the integral on the right hand
side is the usual Riemann integral.

\item[b)] If $\T=h\Z$, then $\sigma(t)=t+h$, $\rho(t)=t-h$, and $\mu(t)
\equiv h$ for every $t \in \Z$. The delta derivative, $f^\Delta$, reduces to the $h$-forward difference $\Delta_{h} f(t) = \frac{f(t+h)-f(t)}{h}$ and $\int_{a}^{b}f(t)\Delta t =
\sum_{k=\frac{a}{h}}^{\frac{b}{h}-1} hf(kh)$.

\item[c)] If $\mathbb{T} = q^{\mathbb{N}_0}$, where $q>1$
is a fixed real number, then $\sigma(t) = q t$, $\rho(t) =
q^{-1} t$, and $\mu(t)= (q-1) t$. For the delta derivative and delta integral we get $f^{\Delta}
(t)=\frac{f(q t)-f(t)}{(q-1) t}$ and $\int_{a}^{b}f(t)\Delta t = (q-1) \sum_{t \in [a,b)} t f(t)$, respectively.
\end{itemize}
\end{example}

Let $a,b\in\T$, with $a<b$, and $[a,b]_{\T}:=[a,b]\cap \T$. Consider the nonlinear time scale optimal control problem
\begin{equation}\label{problem:1}
J(x,u) = \int_{a}^{b}
 L(t,x(t),u(t))\Delta t\longrightarrow\min,
\end{equation}
subject to $x\in C^{1}_{\mathrm{prd}}([a,b]_{\T})$, $u\in C_{\mathrm{prd}}([a,\rho (b)]_{\T})$ satisfying
\begin{equation}\label{problem:2}
x^{\Delta}(t)=\varphi(t,x(t),u(t)),\quad t\in [a,\rho (b)]_{\T}.
\end{equation}

\begin{definition}
A pair $(x,u)$ is said to be admissible if it satisfies the dynamic equation \eqref{problem:2}.
\end{definition}

In the next theorem it is assumed that the following regularity conditions hold:
\begin{description}
\item[$(H)$]  The functions $L(t,\cdot,\cdot)$, $\varphi(t,\cdot,\cdot)$ are differentiable in $(x,u)$; and the functions $\frac{\partial L}{\partial x}(t,\cdot,\cdot)$, $\frac{\partial L}{\partial u}(t,\cdot,\cdot)$, $\frac{\partial \varphi}{\partial x}(t,\cdot,\cdot)$, $\frac{\partial \varphi}{\partial u}(t,\cdot,\cdot)$ are continuous at $(x,u)$ uniformly in $t$ and rd-continuous in $t$ for any admissible $(x,u)$; $I+\mu(t)\frac{\partial\varphi}{\partial x}\neq 0 $ for all $t \in[a,\rho (b)]_{\T}$.
\end{description}

\begin{theorem}[See \protect{\cite[Theorem~9.4]{zeidan2}}]
\label{weak:MP}
Assume that $(\bar{x},\bar{u})$ is a weak local minimum for problem \eqref{problem:1}--\eqref{problem:2} such that the assumption $(H)$ holds. Then there exist a constant $\lambda\geq 0$, and a function $\bar{p}\in C^{1}_{\mathrm{prd}}[a,b]_{\T}$, such that $\lambda+\|\bar{p}\|_{C}\neq 0$ (where $\|p\|_{C}:= \max_{t\in [a,\rho (b)]_{\T}}|p(t)|$) and satisfying the following conditions:
\begin{itemize}
\item[(i)] the adjoint equation: for all $t \in [a,\rho (b)]_{\T}$
$$-\bar{p}^{\Delta}(t)=\frac{\partial \bar{\varphi}}{\partial x}\bar{p}^{\sigma}(t)+\lambda \frac{\partial \bar{L}}{\partial x},$$
\item[(ii)] the stationarity condition: for all $t \in [a,\rho (b)]_{\T}$
$$\frac{\partial \bar{\varphi}}{\partial u}\bar{p}^{\sigma}(t)+\lambda \frac{\partial \bar{L}}{\partial u}=0,$$
\end{itemize}
where $\frac{\partial \bar{\varphi}}{\partial x}$, $ \frac{\partial \bar{L}}{\partial x}$, $\frac{\partial \bar{\varphi}}{\partial u}$, $\frac{\partial \bar{L}}{\partial u}$ are evaluated at $(t,\bar{x}(t),\bar{u}(t))$.
\end{theorem}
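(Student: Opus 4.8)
This is a \emph{weak Pontryagin maximum principle} on a time scale, and the route I would take is the Lagrange multiplier rule transported to the delta calculus and applied to the constrained minimization behind \eqref{problem:1}--\eqref{problem:2}. In outline: (1) view the problem as minimizing $J$ over $(x,u)\in C^{1}_{\mathrm{prd}}\times C_{\mathrm{prd}}$ subject to the operator equation $G(x,u):=x^{\Delta}-\varphi(\cdot,x,u)=0$ together with the prescribed endpoint data; (2) invoke an abstract multiplier rule to produce $\lambda\ge 0$ and a costate $\bar{p}$, not both trivial, that annihilate the Fr\'echet derivative of the associated Lagrangian; (3) compute that derivative using hypothesis $(H)$ and delta integration by parts; (4) read off the adjoint equation (i) from the coefficient of the state variation and the stationarity condition (ii) from the coefficient of the control variation, via the fundamental lemma of the delta calculus of variations.

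For the computational core, take $\bar{p}\in C^{1}_{\mathrm{prd}}$ as multiplier for $G$ and form the Lagrangian
\[
\mathcal{L}(x,u)=\int_a^b\Big[\lambda L(t,x(t),u(t))+\bar{p}^{\sigma}(t)\big(\varphi(t,x(t),u(t))-x^{\Delta}(t)\big)\Big]\Delta t ,
\]
the shift $\bar{p}^{\sigma}$ being the right choice so that the delta integration-by-parts identity $\int_a^b \bar{p}^{\sigma}\eta^{\Delta}\,\Delta t=[\bar{p}\eta]_a^b-\int_a^b \bar{p}^{\Delta}\eta\,\Delta t$ applies without remainder. Differentiating $\mathcal{L}$ at the minimizer in an arbitrary direction $(\eta,\xi)\in C^{1}_{\mathrm{prd}}\times C_{\mathrm{prd}}$ — legitimate by the uniform continuity and rd-continuity of the partials in $(H)$ — and integrating the $\bar{p}^{\sigma}\eta^{\Delta}$ term by parts yields
\[
0=\int_a^b\Big[\Big(\bar{p}^{\Delta}+\frac{\partial\bar{\varphi}}{\partial x}\bar{p}^{\sigma}+\lambda\frac{\partial\bar{L}}{\partial x}\Big)\eta+\Big(\frac{\partial\bar{\varphi}}{\partial u}\bar{p}^{\sigma}+\lambda\frac{\partial\bar{L}}{\partial u}\Big)\xi\Big]\Delta t ,
\]
the boundary term $[\bar{p}\eta]_a^b$ being removed by the prescribed endpoint conditions on $x$ (which make $\eta$ vanish at $a$ and $b$). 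Since this holds for all $(\eta,\xi)$, the fundamental lemma forces the first parenthesis to vanish pointwise — this is the adjoint equation (i), a linear dynamic equation whose regressivity, hence unique solvability for $\bar{p}$, is precisely the hypothesis $I+\mu(t)\frac{\partial\bar{\varphi}}{\partial x}\ne 0$ — and then forces the second parenthesis to vanish pointwise, which is the stationarity condition (ii).

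The step I expect to be the main obstacle is step (2): genuinely \emph{producing} $(\lambda,\bar{p})$ with $\lambda\ge 0$ and the nontriviality $\lambda+\|\bar{p}\|_C\ne 0$, rather than postulating it. Because fixing $x(b)$ imposes the constraint $\eta(b)=0$ on admissible state variations, one cannot apply the fundamental lemma to unrestricted directions without first passing through a multiplier theorem; the standard remedies are an abstract Lyusternik / Dubovitskii--Milyutin type multiplier rule for the operator $G$, or an Ekeland variational principle argument on a penalized functional. Either way the decisive dichotomy is that if the linearization $\xi\mapsto\bigl(\eta(b),\ \text{first variation of }J\bigr)$ is surjective the problem is normal and $\lambda=1$, while otherwise a nonzero covector annihilating its range supplies the abnormal multiplier $\bar{p}$. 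Carrying this out rigorously requires verifying that $G$ is continuously Fr\'echet differentiable between the relevant piecewise-rd-continuous spaces — for which the regularity clauses of $(H)$ are tailored — together with the time-scale existence, uniqueness and continuous-dependence theory for linear dynamic equations (the regressivity condition above is what makes both the linearized state equation and the adjoint equation uniquely solvable). Specializing $\T=\R$, $\T=h\Z$ or $\T=q^{\mathbb{N}_{0}}$ then recovers, respectively, the classical, discrete-time and quantum weak maximum principles.
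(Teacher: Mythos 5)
First, a point of comparison: the paper does not prove Theorem~\ref{weak:MP} at all --- it is imported from Hilscher and Zeidan \cite[Theorem~9.4]{zeidan2} and used as a black box, so there is no in-paper proof to measure you against; the relevant benchmark is the cited source, which indeed proceeds by the kind of abstract first-order multiplier rule you describe. Measured against what such a proof must contain, your computational shell is fine: the delta integration by parts $\int_a^b \bar p^\sigma\eta^\Delta\,\Delta t=[\bar p\eta]_a^b-\int_a^b\bar p^\Delta\eta\,\Delta t$ is the correct identity, reading (i) off the $\eta$-coefficient and (ii) off the $\xi$-coefficient is standard, and the role of $I+\mu(t)\frac{\partial\bar\varphi}{\partial x}\neq 0$ as a regressivity condition for the adjoint dynamics is correctly identified. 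But the decisive step --- actually producing $\lambda\geq 0$ and $\bar p\in C^{1}_{\mathrm{prd}}$ with $\lambda+\|\bar p\|_{C}\neq 0$ for which the first variation of your Lagrangian vanishes --- is only named (Lyusternik, Dubovitskii--Milyutin, Ekeland), not executed. That step is the entire content of the theorem; everything you compute afterwards presupposes it. To close it you would have to verify the hypotheses of an abstract multiplier theorem in the piecewise-rd setting: continuous Fr\'echet differentiability of $(x,u)\mapsto x^{\Delta}-\varphi(\cdot,x,u)$, a closed-range or finite-codimension property of its linearization (which is where the time-scale theory of linear dynamic equations and the regressivity assumption genuinely enter), and the mechanism forcing $\lambda\geq 0$. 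As written, the proposal is a plan for invoking the machinery of \cite{zeidan2} rather than a proof of the statement.

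Second, your handling of the boundary term does not match the problem actually stated. Problem \eqref{problem:1}--\eqref{problem:2} carries no endpoint constraints, so you cannot remove $[\bar p\eta]_a^b$ by asserting that prescribed endpoint data force $\eta(a)=\eta(b)=0$. With genuinely free endpoints the multiplier rule yields transversality conditions $\bar p(a)=\bar p(b)=0$ and the problem is automatically normal, so the clause $\lambda+\|\bar p\|_{C}\neq 0$ with possible $\lambda=0$ would be vacuous; the abnormal case the theorem is designed to allow (and which the paper exploits in its example with $x(0)=x(1)=0$) arises precisely from boundary constraints. A complete proof must carry the boundary structure of \cite{zeidan2} through the multiplier rule and obtain either transversality or abnormality accordingly; silently assuming fixed endpoints changes the statement being proved.
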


The Hamiltonian corresponding to problem \eqref{problem:1}--\eqref{problem:2} is defined as follows:
$$H(t,x,u,\lambda,p)=\lambda L(t,x,u)+(p^\sigma)^T\varphi(t,x,u).$$

\begin{remark}
There are two distinct possibilities for the constant $\lambda$ in Theorem~\ref{weak:MP}:
\begin{itemize}
\item[a)] if $\lambda\neq 0$, we say that control $\bar{u}$ is normal, in this situation we may assume that $\lambda=1$;
\item[b)] if $\lambda= 0$, we say that control $\bar{u}$ is abnormal, and in this case the Hamiltonian does not depend on $L$.
\end{itemize}
\end{remark}

For more about normal and abnormal controls, in the context of optimal control problems on time scales, we refer the reader to \cite{loic,zeidan2}

\begin{definition}
We say that $(x,u,\lambda,p)$ with admissible $(x,u)$,
$p\in C_{\mathrm{prd}}^1[a,\rho (b)]_{\T}$, and $\lambda \geq 0$ is an extremal of problem
\eqref{problem:1}--\eqref{problem:2} if the two conditions (i) and (ii) of Theorem~\ref{weak:MP} are satisfied.
\end{definition}

%--------------------------------------------

\section{Noether's Theorem without Transformation of Time Variable}

Consider the family of transformations
\begin{equation}\label{tra:fam}
h_s:[a,b]_{\T}\times \R^2\rightarrow \R,
\end{equation}
depending on a parameter $s$, $s\in (-\varepsilon,\varepsilon)$, where  $h_s$ have continuous partial delta derivatives of the first and second order with respect to $s$ and $t$ for all admissible $(x,u)$ (hence, we have equality of mixed partial delta derivatives, see \cite{Bohner+Guseinov2004}), and the value $s=0$ corresponds to the identity transformation: $h_0(t,x,u)=x$ for all $(t,x,u)\in [a,b]_{\T}\times \R^2$. By Taylor's formula we have
\begin{equation}\label{tra:fam:T}
h_s(t,x,u)=h_0(t,x,u)+s\xi(t,x,u)+o(s)=x+s\xi(t,x,u)+o(s),
\end{equation}
where $\xi(t,x,u)=\frac{\partial}{\partial s}h_s(t,x,u)|_{s=0}$.

We define the invariance of \eqref{problem:1}--\eqref{problem:2}  under transformations $h_s(t,x,u)$ up to an exact delta differential as follows.

\begin{definition}\label{def:invariance:1}
 If there exists a function $\Phi_s(t,x,u)$ which has continuous partial delta derivatives of the first and second order with respect to $s$ and $t$ for all admissible $(x,u)$; and for all $s\in (-\varepsilon,\varepsilon)$ and admissible $(x,u)$ there exists a control $u_s(\cdot)\in C_{\mathrm{prd}}[a,\rho (b)]_{\T}$ such that:
\begin{equation}\label{def:invariance:1:1}
L(t,x(t),u(t))+\Phi_s^\Delta(t,x(t),u(t))= L(t,h_s(t,x(t),u(t)),u_s(t)),
\end{equation}
\begin{equation}\label{def:invariance:1:2}
\frac{\Delta}{\Delta t}h_s(t,x(t),u(t))=\varphi(t,h_s(t,x(t),u(t)),u_s(t));
\end{equation}
then problem \eqref{problem:1}--\eqref{problem:2} is invariant under transformation $h_s(t,x,u)$ up to $\Phi_s(t,x,u)$.
\end{definition}

\begin{remark}
We assume that $u_0(\cdot)=u(\cdot)$ and $\frac{\partial}{\partial s}u_s(\cdot)\in C_{\mathrm{prd}}[a,\rho (b)]_{\T}$.
\end{remark}

\begin{remark}
Note that condition \eqref{def:invariance:1:1} is satisfied if and only if
\begin{equation}\label{def:invariance:1:1:1}
\int_{a}^{\beta}\left(L(t,x(t),u(t))+\Phi_s^\Delta(t,x(t),u(t))\right)\Delta t= \int_{a}^{\beta}L(t,h_s(t,x(t),u(t)),u_s(t))\Delta t
\end{equation}
for all $\beta \in [a,b]_{\T}$.
\end{remark}

\begin{theorem}\label{Noether:without:time}
If problem \eqref{problem:1}--\eqref{problem:2} is invariant under transformation $h_s(t,x,u)$ up to $\Phi_s(t,x,u)$, then
\begin{equation}\label{eq:Noether:without:time}
p(t)\frac{\partial}{\partial s}h_s(t,x(t),u(t))|_{s=0}+\lambda\frac{\partial}{\partial s} \Phi_s(t,x(t),u(t))|_{s=0}
\end{equation}
is constant along any extremal of problem \eqref{problem:1}--\eqref{problem:2}.
\end{theorem}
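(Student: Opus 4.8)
The plan is to show directly that the quantity in \eqref{eq:Noether:without:time} has vanishing delta derivative along any extremal $(\bar x,\bar u,\lambda,\bar p)$. Write, along the extremal,
$$\eta(t):=\frac{\partial}{\partial s}h_s(t,\bar x(t),\bar u(t))\Big|_{s=0},\qquad \psi(t):=\frac{\partial}{\partial s}u_s(t)\Big|_{s=0},$$
$$\Psi(t):=\frac{\partial}{\partial s}\Phi_s(t,\bar x(t),\bar u(t))\Big|_{s=0},$$
so that the expression to be shown constant is $C(t):=\bar p(t)\eta(t)+\lambda\Psi(t)$. The proof rests on three ingredients: two ``variational identities'' obtained by differentiating the invariance conditions \eqref{def:invariance:1:1}--\eqref{def:invariance:1:2} with respect to $s$ at $s=0$, together with the adjoint equation~(i) and the stationarity condition~(ii) of Theorem~\ref{weak:MP}.

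First I would differentiate \eqref{def:invariance:1:2} with respect to $s$ at $s=0$. Since $h_s$ has continuous partial delta derivatives of first and second order in $s$ and $t$ (so that $\frac{\partial}{\partial s}$ and $\frac{\Delta}{\Delta t}$ commute, by \cite{Bohner+Guseinov2004}), and since $h_0(t,\bar x,\bar u)=\bar x$ and $u_0=\bar u$, the chain rule yields
$$\eta^{\Delta}(t)=\frac{\partial\bar\varphi}{\partial x}\,\eta(t)+\frac{\partial\bar\varphi}{\partial u}\,\psi(t),$$
with partials evaluated at $(t,\bar x(t),\bar u(t))$. Differentiating \eqref{def:invariance:1:1} the same way, the term $L(t,\bar x,\bar u)$ drops out and one gets
$$\Psi^{\Delta}(t)=\frac{\partial\bar L}{\partial x}\,\eta(t)+\frac{\partial\bar L}{\partial u}\,\psi(t).$$
(Alternatively one may differentiate the integrated form \eqref{def:invariance:1:1:1} in $s$ and then in $\beta$; the assumed $C_{\mathrm{prd}}$-regularity of $\frac{\partial}{\partial s}u_s$ is what legitimises passing the $s$-derivative under the delta integral.)

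Next I would compute $C^{\Delta}$ using the time-scale product rule in the form $(\bar p\eta)^{\Delta}=\bar p^{\Delta}\eta+\bar p^{\sigma}\eta^{\Delta}$, chosen precisely so that $\bar p^{\sigma}$ appears, matching~(i) and~(ii). Thus
$$C^{\Delta}(t)=\bar p^{\Delta}(t)\,\eta(t)+\bar p^{\sigma}(t)\,\eta^{\Delta}(t)+\lambda\,\Psi^{\Delta}(t).$$
Substituting the two variational identities for $\eta^{\Delta}$ and $\Psi^{\Delta}$, and the adjoint equation $\bar p^{\Delta}=-\frac{\partial\bar\varphi}{\partial x}\bar p^{\sigma}-\lambda\frac{\partial\bar L}{\partial x}$ for $\bar p^{\Delta}$, the terms carrying $\eta$ cancel in pairs (the two $\frac{\partial\bar\varphi}{\partial x}\bar p^{\sigma}\eta$ terms against each other, the two $\lambda\frac{\partial\bar L}{\partial x}\eta$ terms against each other), leaving
$$C^{\Delta}(t)=\left(\frac{\partial\bar\varphi}{\partial u}\,\bar p^{\sigma}(t)+\lambda\frac{\partial\bar L}{\partial u}\right)\psi(t)=0$$
by the stationarity condition~(ii). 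Hence $C^{\Delta}\equiv 0$ on $[a,\rho(b)]_{\T}$, so $C$ is constant there, which is the assertion.

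The cancellations are routine algebra; the single delicate point is the interchange of $\frac{\partial}{\partial s}$ with the delta operation (and the matching choice of product rule), i.e.\ checking that the regularity hypotheses imposed on $h_s$, $\Phi_s$ and $u_s$ are exactly what is needed to make the two variational identities rigorous. That is the main, though mild, obstacle; everything else is forced by Theorem~\ref{weak:MP}.
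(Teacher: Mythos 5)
Your proposal is correct and follows essentially the same route as the paper: differentiate the two invariance conditions at $s=0$, invoke the adjoint and stationarity conditions of Theorem~\ref{weak:MP}, and combine via the product rule $(pg)^{\Delta}=p^{\Delta}g+p^{\sigma}g^{\Delta}$ together with the interchange of $\frac{\partial}{\partial s}$ and $\frac{\Delta}{\Delta t}$. The only difference is organisational — you expand $C^{\Delta}$ directly and cancel, whereas the paper chains the substitutions before packaging the result as an exact delta derivative — so the two arguments are mathematically identical.
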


\begin{proof}
Differentiating both sides of equation \eqref{def:invariance:1:1} with respect to $s$, and then setting $s=0$ we obtain
\begin{multline*}
\frac{\partial}{\partial s} \Phi_s^\Delta(t,x(t),u(t))|_{s=0}=\frac{\partial L}{\partial x}(t,x(t),u(t))\frac{\partial }{\partial s}h_s(t,x(t),u(t))|_{s=0}\\+\frac{\partial L}{\partial u}(t,x(t),u(t))\frac{\partial }{\partial s}u_s(t)|_{s=0}.
\end{multline*}
If $(x(\cdot),u(\cdot),\lambda,p(\cdot))$ is an extremal of problem \eqref{problem:1}--\eqref{problem:2}, then it satisfies
$$-p^{\Delta}(t)=\frac{\partial \varphi}{\partial x}p^{\sigma}(t)+\lambda \frac{\partial L}{\partial x}.$$
Therefore,
\begin{equation*}
\lambda\frac{\partial}{\partial s} \Phi_s^\Delta|_{s=0}=-p^\Delta(t)\frac{\partial }{\partial s}h_s|_{s=0}-p^\sigma(t)\frac{\partial \varphi}{\partial x}\frac{\partial }{\partial s}h_s|_{s=0}+\lambda\frac{\partial L}{\partial u}\frac{\partial }{\partial s}u_s(t)|_{s=0}.
\end{equation*}
By the second condition of Theorem~\ref{weak:MP}, we have
\begin{equation*}
\frac{\partial \varphi}{\partial u}p^\sigma(t)\frac{\partial }{\partial s}u_s(t)|_{s=0}+\lambda\frac{\partial L}{\partial u}\frac{\partial }{\partial s}u_s(t)|_{s=0}=0.
\end{equation*}
Combining the last two equations, we arrive to
\begin{equation}\label{prf:1}
\lambda\frac{\partial}{\partial s} \Phi_s^\Delta|_{s=0}+p^\Delta(t)\frac{\partial }{\partial s}h_s|_{s=0}+p^\sigma(t)\frac{\partial \varphi}{\partial x}\frac{\partial }{\partial s}h_s|_{s=0}+\frac{\partial \varphi}{\partial u}p^\sigma(t)\frac{\partial }{\partial s}u_s(t)|_{s=0}=0.
\end{equation}
Observe that, by the assumptions, we have $\frac{\Delta}{\Delta t}\frac{\partial}{\partial s}h_s|_{s=0}=\frac{\partial}{\partial s}\frac{\Delta}{\Delta t}h_s|_{s=0}$. Therefore, from \eqref{def:invariance:1:2} we get
\begin{equation}\label{prf:2}
\frac{\Delta}{\Delta t}\frac{\partial}{\partial s}h_s|_{s=0}=\frac{\partial \varphi}{\partial x}\frac{\partial }{\partial s}h_s|_{s=0}+\frac{\partial \varphi}{\partial u}\frac{\partial }{\partial s}u_s(t)|_{s=0}.
\end{equation}
Inserting \eqref{prf:2} into \eqref{prf:1} we get
\begin{equation}\label{prf:3}
\lambda\frac{\partial}{\partial s} \Phi_s^\Delta|_{s=0}+p^\Delta(t)\frac{\partial }{\partial s}h_s|_{s=0}+ p^\sigma(t)\frac{\Delta}{\Delta t}\frac{\partial}{\partial s}h_s|_{s=0}=0.
\end{equation}
Applying formula $(fg)^\Delta=f^{\sigma}g^{\Delta}+f^{\Delta}g$, we can rewrite \eqref{prf:3} as
\begin{equation*}
\lambda\frac{\partial}{\partial s} \Phi_s^\Delta|_{s=0}+\frac{\Delta}{\Delta t}\left[p(t)\frac{\partial }{\partial s}h_s|_{s=0}\right]=0.
\end{equation*}
Thus,
\begin{equation*}
\frac{\Delta}{\Delta t}\left[\lambda\frac{\partial}{\partial s} \Phi_s|_{s=0}+p(t)\frac{\partial }{\partial s}h_s|_{s=0}\right]=0.
\end{equation*}
Therefore,
\begin{equation*}
\lambda\frac{\partial}{\partial s} \Phi_s|_{s=0}+p(t)\frac{\partial }{\partial s}h_s|_{s=0}
\end{equation*}
is constant along any extremal of problem \eqref{problem:1}--\eqref{problem:2}.
\end{proof}

\begin{example}(Cf. \cite{torres})
Consider the following problem:
\begin{equation}\label{Ex:1:1}
\int_{a}^{b} u^2(t)\Delta t\longrightarrow\min,
\end{equation}
subject to $x\in C^{1}_{\mathrm{prd}}([a,b]_{\T})$, $u\in C_{\mathrm{prd}}([a,\rho (b)]_{\T})$ satisfying
\begin{equation}\label{Ex:1:2}
x^{\Delta}(t)=u(t),\quad t\in [a,\rho (b)]_{\T}.
\end{equation}
This problem is invariant under the transformation $h_s(t,x,u)=x+st$ up to $\Phi_s(t,x,u)=s^2t+2xs$. Indeed, we have
\begin{equation*}
L(u^s)=(u+s)^2=u^2+2us+s^2=u^2+(s^2t+2sx)^\Delta=L(u)+\frac{\Delta}{\Delta t}\Phi_s(t,x,u)
\end{equation*}
and
\begin{equation*}
\frac{\Delta}{\Delta t}h_s(t,x,u)=x^\Delta(t)+s=\varphi(u^s).
\end{equation*}
From Theorem~\ref{Noether:without:time} it follows that $\frac{\Delta}{\Delta t}\left[p(t)t+\lambda 2x(t)\right]=0$ along any extremal of \eqref{Ex:1:1}--\eqref{Ex:1:2}. On the other hand an extremal of \eqref{Ex:1:1}--\eqref{Ex:1:2} should satisfy the following condition:
$p^\Delta (t)=0$ and $p^\sigma(t)+2\lambda u(t)=0$. Therefore, by Theorem~\ref{Noether:without:time}, $\frac{\Delta}{\Delta t}\left[x^\Delta (t)t- x(t)\right]=0$ along any extremal of \eqref{Ex:1:1}--\eqref{Ex:1:2}.
\end{example}

\section{Noether's Theorem with Transformation of Time Variable}

In this subsection we change the time. Thus we consider the optimal control problem on many different time scales. Therefore, we shall assume that problem \eqref{problem:1}--\eqref{problem:2} is defined for all $t\in \R$.
Consider the family of transformations
\begin{equation}\label{tra:fam:2}
h_s(t,x,u)=(h_s^t(t,x,u),h_s^x(t,x,u)):[a,b]_{\T}\times \R^2\rightarrow \R^2,
\end{equation}
depending on a parameter $s$, $s\in (-\varepsilon,\varepsilon)$, where  $h_s^t$, $h_s^x$ have continuous partial delta derivatives of the first and second order with respect to $s$ and $t$ for all admissible $(x,u)$, and the value $s=0$ corresponds to the identity transformation: $(h_0^t(t,x,u),h_0^x(t,x,u))=(t,x)$ for all $(t,x,u)\in [a,b]_{\T}\times \R^2$. By Taylor's formula we have
\begin{eqnarray*}
h_s^t(t,x,u)=h_0^t(t,x,u)+s\zeta(t,x,u)+o(s)=t+s\zeta(t,x,u)+o(s),\\
h_s^x(t,x,u)=h_0^x(t,x,u)+s\xi(t,x,u)+o(s)=x+s\xi(t,x,u)+o(s),
\end{eqnarray*}
where $\zeta(t,x,u)=\frac{\partial}{\partial s}h_s^t(t,x,u)|_{s=0}$ and $\xi(t,x,u)=\frac{\partial}{\partial s}h_s^x(t,x,u)|_{s=0}$.

We assume that for every admissible $(x,u)$ and all $s\in (-\varepsilon,\varepsilon)$ the map $[a,b]\ni t\mapsto \alpha (t):=h_s^t(t,x,u)=t_s\in \R$ is a strictly increasing $C_{\mathrm{rd}}^1$ function and its imagine is again a time scale with the forward shift operator $\bar{\sigma}$, the graininess function $\bar{\mu}$ and the delta derivative $\bar{\Delta}$. Observe that in this case the following holds: $\bar{\sigma}\circ \alpha = \alpha \circ \sigma$ \cite{ahl}.

We define the invariance of \eqref{problem:1}--\eqref{problem:2}  under transformation \eqref{tra:fam:2} up to an exact delta differential as follows.

\begin{definition}\label{def:invariance:2}
Let $I+\bar{\mu}(t_s)\frac{\partial \varphi}{\partial x}\neq 0$ for all
$t_s\in [h_s^t(a,x(a),u(a)), h_s^t(b,x(b),u(b))]^\kappa$. If there exists a function $\Phi_s(t,x,u)$ which has continuous partial delta derivatives of the first and second order with respect to $s$ and $t$ for all admissible $(x,u)$; and for all $s\in (-\varepsilon,\varepsilon)$ and admissible $(x,u)$ there exists a control $u_s(\cdot)\in C_{\mathrm{prd}}[a,\rho (b)]_{\T}$ such that:
\begin{multline}\label{def:invariance:2:1:1}
\int_{h_s^t(a,x(a),u(a))}^{h_s^t(\beta,x(\beta),u(\beta))}L(t_s,h_s^x(t_s,x(t_s),u(t_s)),u_s(t_s))\bar{\Delta}t_s\\
=\int_{a}^{\beta}\left(L(t,x(t),u(t))\Delta t+\Phi_s^\Delta(t,x(t),u(t))\right)\Delta t
\end{multline}
for all $\beta \in [a,b]$;
\begin{equation}\label{def:invariance:2:1:2}
\frac{\bar{\Delta}}{\bar{\Delta} t_s}h_s^x(t_s,x(t_s),u(t_s))=\varphi(t_s,h_s^x(t_s,x(t_s),u(t_s)),u_s(t_s));
\end{equation}
then problem \eqref{problem:1}--\eqref{problem:2} is invariant under transformation \eqref{tra:fam:2} up to $\Phi_s(t,x,u)$.
\end{definition}

\begin{theorem}\label{Noether:with:time}
If problem \eqref{problem:1}--\eqref{problem:2} is invariant under transformation $h_s(t,x,u)$ up to $\Phi_s(t,x,u)$, then
\begin{multline}
\frac{\Delta}{\Delta t}\left[p_k(t)\frac{\partial}{\partial s}h_s^t(t,x(t),u(t))|_{s=0}+p(t)\frac{\partial}{\partial s}h_s^x(t,x(t),u(t))|_{s=0}\right.\\
\left.+\lambda\frac{\partial}{\partial s} \Phi_s(t,x(t),u(t))|_{s=0}\right]=0
\end{multline}
is constant along any extremal of problem \eqref{problem:1}--\eqref{problem:2}.
Moreover, if $\rho \circ \sigma=id_{[a,\rho(b)]_{\T}}$, then
\begin{multline}\label{eq:Noether:with:time}
-H^\rho(t,x(t),u(t),\lambda,p(t))\frac{\partial}{\partial s}h_s^t(t,x(t),u(t))|_{s=0}\\
+p(t)\frac{\partial}{\partial s}h_s^x(t,x(t),u(t))|_{s=0}+\lambda\frac{\partial}{\partial s} \Phi_s(t,x(t),u(t))|_{s=0}
\end{multline}
is constant along any extremal of problem \eqref{problem:1}--\eqref{problem:2}.
\end{theorem}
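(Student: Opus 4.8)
The plan is to deduce Theorem~\ref{Noether:with:time} from Theorem~\ref{Noether:without:time} by the classical device of absorbing the transformation of the independent variable into a transformation of an auxiliary state variable. Since the left-hand side of \eqref{def:invariance:2:1:1} is a delta integral over the image time scale $\bar{\T}_{s}=\alpha([a,b]_{\T})$, where $\alpha(t)=h_{s}^{t}(t,x(t),u(t))$, I would first rewrite it as a delta integral over the original $[a,\beta]_{\T}$: by the substitution rule for delta integrals applied to the strictly increasing $C^{1}_{\mathrm{rd}}$ map $\alpha$ (see \cite{ahl,Agarwal}) one has $\int_{\alpha(a)}^{\alpha(\beta)}f(t_{s})\,\bar{\Delta}t_{s}=\int_{a}^{\beta}f(\alpha(t))\,\alpha^{\Delta}(t)\,\Delta t$, the identity $\bar{\sigma}\circ\alpha=\alpha\circ\sigma$ ensuring that the shifted composite functions transform consistently. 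After this rewriting, conditions \eqref{def:invariance:2:1:1}--\eqref{def:invariance:2:1:2} become an invariance statement of exactly the type treated in Section~3, but for the enlarged control problem with state $(x_{k},x):=(t,x)$, controls $(u,v)$ (where $v$ plays the role of the local rescaling $\alpha^{\Delta}$), dynamics $x_{k}^{\Delta}=v$ and $x^{\Delta}=\varphi(x_{k},x,u)\,v$, and Lagrangian $\widetilde{L}(x_{k},x,u,v)=L(x_{k},x,u)\,v$; the family \eqref{tra:fam:2} now acts only on the state $(x_{k},x)$, with generators $\zeta$ and $\xi$, and the controls transform as $(u,v)\mapsto(u_{s},v_{s})$.

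I would then check that the given extremal $(x,u,\lambda,p)$ lifts to an extremal of the enlarged problem: with $x_{k}(t)=t$, $v(t)\equiv 1$, the same $\lambda$ and $p$, and a multiplier $p_{k}$ conjugate to $x_{k}$, the adjoint equation for $p$ and the stationarity condition in $u$ of the enlarged Pontryagin principle reduce on this slice precisely to conditions (i) and (ii) of Theorem~\ref{weak:MP} for $(x,u,\lambda,p)$ --- the regularity condition $I+\bar{\mu}\,\partial\varphi/\partial x\neq 0$ imposed in Definition~\ref{def:invariance:2} being exactly what makes $(H)$ hold for the enlarged problem --- while $p_{k}$ is pinned down by the new conditions. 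Applying Theorem~\ref{Noether:without:time} to the enlarged problem and the state transformation $(h_{s}^{t},h_{s}^{x})$ then yields at once that
\[
p_{k}(t)\,\zeta(t,x(t),u(t))+p(t)\,\xi(t,x(t),u(t))+\lambda\,\frac{\partial}{\partial s}\Phi_{s}(t,x(t),u(t))\Big|_{s=0}
\]
is $\Delta$-constant along the extremal, which is the first assertion.

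For the second assertion I would identify $p_{k}$ explicitly. The stationarity condition of Theorem~\ref{weak:MP} for the enlarged problem with respect to the new control $v$ reads $\lambda L+p^{\sigma}\varphi+p_{k}^{\sigma}=0$, that is $p_{k}^{\sigma}(t)=-H(t,x(t),u(t),\lambda,p(t))$ along the extremal. Under the hypothesis $\rho\circ\sigma=\mathrm{id}_{[a,\rho(b)]_{\T}}$ this relation can be composed with $\rho$ and, using the continuity of $p_{k}$ and of $H$ along the extremal together with the adjoint equation for $p_{k}$, pulled back to $p_{k}(t)=-H^{\rho}(t,x(t),u(t),\lambda,p(t))$; substituting this into the first assertion gives \eqref{eq:Noether:with:time}.

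The step I expect to be the main obstacle is this last identification, together with the time-scale change of variables underlying the whole reduction. On $\R$ the reparametrization is routine and $p_{k}=-H$ is immediate, but on a general time scale one must check that $\alpha$ carries $[a,b]_{\T}$ onto a genuine time scale with delta integral transforming by the Jacobian factor $\alpha^{\Delta}$, that $\widetilde{L}$ and the enlarged dynamics satisfy the differentiability and rd-continuity requirements of $(H)$, and --- most delicately --- that the graininess ($\mu\neq 0$) contributions cancel when passing from $p_{k}^{\sigma}=-H$ to $p_{k}=-H^{\rho}$, which is precisely where the assumption $\rho\circ\sigma=\mathrm{id}_{[a,\rho(b)]_{\T}}$ is needed. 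Once these points are settled, the remaining manipulations --- commuting $\partial/\partial s$ with $\Delta/\Delta t$, a Taylor expansion in $s$, and the product rule $(fg)^{\Delta}=f^{\sigma}g^{\Delta}+f^{\Delta}g$ --- are of exactly the same routine nature as in the proof of Theorem~\ref{Noether:without:time}.
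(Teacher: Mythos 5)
Your proposal is correct and follows essentially the same route as the paper: the time transformation is absorbed by introducing the auxiliary state $k(t)=t$ with new control $r=k^{\Delta}$, Lagrangian $\tilde{L}=L(k,x,u)r$ and dynamics $x^{\Delta}=\varphi(k,x,u)r$, the change-of-variables formula (with $\bar{\sigma}\circ\alpha=\alpha\circ\sigma$) turns invariance in the sense of Definition~\ref{def:invariance:2} into invariance of the enlarged problem in the sense of Definition~\ref{def:invariance:1}, and Theorem~\ref{Noether:without:time} applied on the slice $k(t)=t$, $r\equiv 1$ gives the first conserved quantity. Your identification of $p_k$ via the stationarity condition in the new control, $p_k^{\sigma}=-\lambda L-p^{\sigma}\varphi=-H$, followed by composition with $\rho$ under the hypothesis $\rho\circ\sigma=\mathrm{id}_{[a,\rho(b)]_{\T}}$ to get $p_k=-H^{\rho}$, is exactly the paper's concluding step.
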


\begin{proof}
Firstly we show that invariance of \eqref{problem:1}--\eqref{problem:2} in the sense of Definition~\ref{def:invariance:2} is equivalent to invariance of another problem in the sense of Definition~\ref{def:invariance:1}. Let us define
\begin{equation}
\label{problem:3}
\begin{split}
\tilde{J}(k,x,r,u)&=\int_{a}^{b}
\tilde{L}(t;k(t),x(t);r(t),u(t))\Delta t\\
&:=\int_{a}^{b} L(k(t),x(t),u(t))r(t)\Delta t\longrightarrow\min
\end{split}
\end{equation}
subject to
\begin{equation}
\label{problem:4}
\left\{
  \begin{array}{l}
    k^\Delta (t)=r(t)\\
    x^{\Delta}(t)=\tilde{\varphi}(t;k(t),x(t);r(t),u(t)):=\varphi(k(t),x(t),u(t))r(t),\\
  \end{array}
\right.
\end{equation}
$t\in [a,\rho (b)]_{\T}$. The Hamiltonian corresponding to problem \eqref{problem:3}--\eqref{problem:4} has the form
$$\tilde{H}(t,k,x,r,u,\lambda,p_k,p)=\lambda\tilde{L}(t;k,x;r,u)+p_k^{\sigma}r+p^{\sigma}\tilde{\varphi}(t;k,x;r,u).$$
Observe that for $k(t)=t$ we have
$$L(t,x(t),u(t))=\tilde{L}(t;k(t),x(t);r(t),u(t)),$$ so we get $\tilde{J}(k,x,r,u)=J(x,u)$ whenever $k(t)=t$. Moreover, for $k(t)=t$ we have
$$x^{\Delta}(t)=\varphi(t,x(t),u(t))=\tilde{\varphi}(t;k(t),x(t);r(t),u(t)).$$
Now we consider the family of transformations $h_s(t,x,u)=(h_s^t(t,x,u),h_s^x(t,x,u))$. From the invariance of \eqref{problem:1}--\eqref{problem:2}, for $k(t)=t$, we get
\begin{equation}
\begin{split}
\int_{a}^{b}
&\left(\tilde{L}(t;k(t),x(t);r(t),u(t))+\Phi_s^\Delta(t,x(t),u(t))\right)\Delta t\\
&=\int_{a}^{b}
 \left(L(t,x(t),u(t))+\Phi_s^\Delta(t,x(t),u(t))\right)\Delta t\\
 &=\int_{\alpha (a)}^{\alpha (b)}
 L(t_s,h_s^x(t_s,x(t_s),u(t_s)),u_s(t_s))\bar{\Delta}t_s\\
 &=\int_{a}^{b}
L(\alpha (t),h_s^x(\alpha (t),x(\alpha (t)),u(\alpha (t))),u_s(\alpha (t)))\alpha^\Delta (t)\Delta t\\
&=\int_{a}^{b}
\tilde{L}(t;\alpha (t),(h_s^x\circ\alpha) (t,x(t),u(t)),\alpha ^\Delta (t),(u_s\circ \alpha)(t))\Delta t,
\end{split}
\end{equation}
$$\frac{\Delta}{\Delta t}\alpha (k(t))=\alpha ^\Delta (t),$$ and
\begin{equation}
\begin{split}
\frac{\Delta}{\Delta t}(h_s^x\circ \alpha) (t,x(t),u(t))
&=\frac{\bar{\Delta}}{\bar{\Delta} t_s}h_s^x(t_s,x(t_s),u(t_s))\alpha ^\Delta (t)\\
&=\varphi(t_s,h_s^x(t_s,x(t_s),u(t_s)),u_s(t_s))\alpha ^\Delta (t)\\
&=\varphi(\alpha (t),h_s^x(\alpha (t),x(\alpha (t)),u(\alpha (t))),u_s(\alpha (t)))\alpha ^\Delta (t)\\
&=\tilde{\varphi}(t;\alpha (t),(h_s^x\circ \alpha)(t,x(t),u(t));\alpha ^\Delta (t),(u_s\circ\alpha )(t)).
\end{split}
\end{equation}
This means that for $k(t)=t$, if problem \eqref{problem:1}--\eqref{problem:2} is invariant under transformation \eqref{tra:fam:2} up to $\Phi_s(t,x,u)$ in the sense of Definition~\ref{def:invariance:2}, then problem \eqref{problem:3}--\eqref{problem:4} is invariant under transformation \eqref{tra:fam:2} up to $\Phi_s(t,x,u)$ in the sense of Definition~\ref{def:invariance:1}.
Let $(k,x,r,u,\lambda,p_k,p)$ be an extremal of problem \eqref{problem:3}--\eqref{problem:4}. Applying Theorem~\ref{weak:MP} we obtain
$$p^\Delta=-\lambda \frac{\partial \tilde{L} }{\partial x}-p^\sigma\frac{\partial \tilde{\varphi} }{\partial x},$$
$$p_k^\Delta=-\lambda \frac{\partial \tilde{L} }{\partial k} -p^\sigma \frac{\partial \tilde{\varphi} }{\partial k},$$
$$p^\sigma\frac{\partial \tilde{\varphi} }{\partial u}+\lambda\frac{\partial \tilde{L} }{\partial u}=0,$$
$$p_k^\sigma+\lambda\frac{\partial \tilde{L} }{\partial r}+p^\sigma \frac{\partial \tilde{\varphi} }{\partial r}=0.$$
By Theorem~\ref{Noether:without:time}, we get for $k(t)=t$
\begin{multline}
\frac{\Delta}{\Delta t}\left[p_k(t)\frac{\partial}{\partial s}h_s^t(t,x(t),u(t))|_{s=0}+p(t)\frac{\partial}{\partial s}h_s^x(t,x(t),u(t))|_{s=0}\right.\\
\left.+\lambda\frac{\partial}{\partial s} \Phi_s(t,x(t),u(t))|_{s=0}\right]=0.
\end{multline}
Now assume that $\rho \circ \sigma=id_{[a,\rho(b)]_{\T}}$. As $p_k^\sigma=-\lambda L-p^\sigma \varphi$ we have $p_k=-H^\rho$ and condition \eqref{eq:Noether:with:time} holds along any extremal of problem \eqref{problem:1}--\eqref{problem:2} as desired.
\end{proof}

From now on, it is assumed that $\rho \circ \sigma=id_{[a,\rho(b)]_{\T}}$.

\begin{example}[Cf. \cite{gou}]
Consider the following problem:
\begin{equation}
\label{Ex:2:1}
\int_{a}^{b} \left(u_1^2(t)+u_2^2(t)\right)\Delta t\longrightarrow\min,
\end{equation}
subject to $x\in C^{1}_{\mathrm{prd}}([a,b]_{\T})$, $u_1,u_2\in C_{\mathrm{prd}}([a,\rho (b)]_{\T})$ satisfying
\begin{equation}\label{EX:2:2}\left\{
  \begin{array}{l}
    x_1^\Delta (t)=u_1(t)\cos x_3(t),\\
    x_2^{\Delta}(t)=u_1(t)\sin x_3(t),\\
    x_3^\Delta (t)=u_2(t).
  \end{array}
\right.
\end{equation}
The control system \eqref{Ex:2:1}--\eqref{EX:2:2} serves as model for the kinematics of a car \cite{gou,Martin}. This is an autonomous system, so it is invariant under the transformation $h_s^t(t,x,u)=t+s$ up to $\Phi_s\equiv 0$.
From Theorem~\ref{Noether:with:time} it follows that $H\circ\rho$ is constant along any extremal of \eqref{Ex:2:1}--\eqref{EX:2:2}, where
$$H(t,x_1,x_2,x_3,u_1,u_2,\lambda, p_1,p_2,p_3)=\lambda \left(u_1^2+u_2^2\right)+p_1^{\sigma}u_1\cos x_3+p_2^{\sigma}u_1\sin x_3+p_3^{\sigma}u_2.$$
\end{example}

In the next example we apply Theorem~\ref{Noether:with:time} for a problem with abnormal controls.

\begin{example}
Consider the following problem:
\begin{equation}
\label{Ex:3:1}
\int_{0}^{1} u(t)\Delta t\longrightarrow\max,
\end{equation}
subject to $x\in C^{1}_{\mathrm{prd}}([0,1]_{\T})$, $u\in C_{\mathrm{prd}}([0,\rho (1)]_{\T})$ satisfying
\begin{equation}\label{EX:3:2}\left\{
  \begin{array}{l}
    x^\Delta (t)=(u(t)-u^2(t))^2,\\
    x(0)=0,\\
    x(1)=0.
  \end{array}
\right.
\end{equation}
As this system is autonomous, it is invariant under the time transformation $h_s^t(t,x,u)=t+s$ up to $\Phi_s\equiv 0$.
Therefore, by Theorem~\ref{Noether:with:time}, we get that $H\circ\rho$ is constant along any extremal of \eqref{Ex:3:1}--\eqref{EX:3:2}, where
$$H(t,x,u,\lambda, p)=\lambda u+p^{\sigma}(u-u^2)^2.$$ Observe that an extremal of problem \eqref{Ex:3:1}--\eqref{EX:3:2} should satisfy the following conditions:
\begin{equation}\label{Ex:3:3}
p^{\Delta}(t)=0, \quad -4p^{\sigma}(t)u(t)(u(t)-u^2(t))+\lambda=0.
\end{equation}
If we chose $\lambda=0$ and $p=k$, where $k$ is a negative constant, then $(x,u,\lambda,p)=(0,1,0,k)$ or $(x,u,\lambda,p)=(0,0,0,k)$ satisfy conditions \eqref{Ex:3:3}. Note that $x^{\Delta}=(u-u^2)^2\geq0$ and $ x(0)=x(1)=0$ imply that $x^{\Delta}=0$. Hence, if a control is admissible, then we have $u\in\{0,1\}$. In addition, for every admissible $u$, we have
$$\int_{0}^{1} 1\Delta t\geq \int_{0}^{1} u(t)\Delta t.$$
This means that $u=1$ is the optimal control for problem \eqref{Ex:3:1}--\eqref{EX:3:2}.
\end{example}

\begin{remark}
Observe that, for autonomous systems, Theorem~\ref{Noether:with:time} induces that:
\begin{enumerate}
\item $H\circ\rho$ is constant along the optimal path;
\item for $\T=\R$ the well-known result, that the Hamiltonian
$H$ is constant along the optimal path.
\end{enumerate}
\end{remark}

\section*{Acknowledgements}

This work was done during the stay of M. R. Sidi Ammi at the Bialystok University of Technology as a visiting professor.
A. B. Malinowska was supported by the Bialystok
University of Technology grant S/WI/02/2011. The authors are grateful to two anonymous referees
for valuable remarks and comments.
\bibliographystyle{plain}
%\bibliography{bib}

\begin{thebibliography}{99}

\bibitem{Agarwal}
R. Agarwal, M. Bohner, D. O'Regan\ and\ A. Peterson,
Dynamic equations on time scales: a survey,
J. Comput. Appl. Math. {\bf 141} (2002), no.~1-2, 1--26.

\bibitem{ahl}
C. D. Ahlbrandt, M. Bohner\ and\ J. Ridenhour, Hamiltonian systems
on time scales, J. Math. Anal. Appl. {\bf 250} (2000), no.~2, 561--578.

\bibitem{Atici06}
F. M. Atici, D. C. Biles\ and\ A. Lebedinsky,
An application of time scales to economics,
Math. Comput. Modelling {\bf 43} (2006), no.~7-8, 718--726.

\bibitem{Atici09}
F. M. Atici\ and\ C. S. McMahan, A comparison in the theory
of calculus of variations on time scales with an application
to the Ramsey model, Nonlinear Dyn. Syst. Theory {\bf 9} (2009), no.~1, 1--10.

\bibitem{B:T:08}
Z. Bartosiewicz\ and\ D. F. M. Torres,
Noether's theorem on time scales,
J. Math. Anal. Appl. {\bf 342} (2008), no.~2, 1220--1226.

\bibitem{B:M:T:11}
Z. Bartosiewicz, N. Martins and D. F. M. Torres, The Second Euler--Lagrange Equation of Variational
Calculus on Time Scales, European Journal of Control \textbf{17} (2011), no. 1, 9--18.

\bibitem{livro}
M. Bohner\ and\ A. Peterson,
{\it Dynamic equations on time scales},
Birkh\"auser Boston, Boston, MA, 2001.


\bibitem{CD:Bohner:2004}
M. Bohner, Calculus of variations on time scales,
Dynam. Systems Appl. {\bf 13} (2004), no.~3--4, 339--349.

\bibitem{Bohner+Guseinov2004}
M. Bohner\ and\ Gusein Sh. Guseinov,
Partial differentiation on time scales,
Dynam. Systems Appl. {\bf 13} (2004), 351--379.


\bibitem{loic}
L. Bourdin\ and\ E. Trelat, Pontryagin Maximum Principle for finite dimensional nonlinear optimal control problems on time scales, SIAM J. Control Optim. {\bf 51} (2013), no.~5, 3781--3813.

\bibitem{gelfand}
I. M. Gelfand\ and\ S. V. Fomin, {\it Calculus of variations}, Prentice-Hall Inc, Englewood Cliffs New Jersey, 1963.

\bibitem{gou}
P. D. F. Gouveia\ and\ D. F. M. Torres,
Automatic computation of conservation laws in the calculus of variations and
optimal control, Comput. Methods Appl. Math. {\bf 5} (2005), no.~4, 387--409.

\bibitem{Hilger} S. Hilger, \emph{Ein Ma\ss kettenkalk\"{u}l mit Anwendung auf
Zentrumsmannigfaltigkeiten}, PhD thesis, Universit\"{a}t W\"{u}rzburg (1988).

\bibitem{Hilger97}
S. Hilger, Differential and difference calculus---unified!,
Nonlinear Anal. {\bf 30} (1997), no.~5, 2683--2694.

\bibitem{zeidan}
R. Hilscher\ and\ V. Zeidan, Calculus of variations on time scales:
weak local piecewise $C\sp 1\sb {\rm rd}$ solutions with variable endpoints,
J. Math. Anal. Appl. {\bf 289} (2004), no.~1, 143--166.

\bibitem{zeidan2}
R. Hilscher\ and\ V. Zeidan, Weak maximum principle
and accessory problem for control problems on time scales,
Nonlinear Anal. {\bf 70} (2009), no.~9, 3209--3226.

\bibitem{MM}
A. B. Malinowska and N. Martins, The second Noether theorem on time scales, Abstr. Appl. Anal. (2013), Article ID 675127, 14 pp.


\bibitem{Martin}
Ph. Martin, R. M. Murray, and P. Rouchon, Flat systems, in Mathematical control
theory, Part 1, 2 (Trieste, 2001), ICTP Lect. Notes, VIII, Abdus Salam Int. Cent.
Theoret. Phys., Trieste (2002), 705--768.


\bibitem{noether}
E. Noether, Invariante variationsprobleme,  Nachr Akad Wiss Gottingen,
Math-Phys Kl II (1918), 235--257.

\bibitem{ntrans}
E. Noether, Invariant variational problems, Transport Theory of Statist
Phys (1971) 1, 186--207.

\bibitem{rund}
H. Rund, {\it The Hamilton--Jacobi theory in the calculus of variations: Its role
in mathematics and physics}, D Van Nostrand Co Ltd, London-Toronto,
Ont-New York, 1966.

\bibitem{SA}
M. R. Sidi Ammi and A. B. Malinowska, Existence of solutions for a coupled quasilinear system on time scales, Electron. J. Diff. Equ., {\bf 2011} (2011), no.~131, 1--11.

\bibitem{torres}
D. F. M. Torres, On the Noether Theorem for Optimal Control,
European Journal of Control {\bf 8} (2002), no.~1, 56--63.

\bibitem{torres3}
D. F. M. Torres, Conservation laws in optimal control.
In: Dynamics, bifurcations, and control (Kloster Irsee, 2001), 287--296,
Lecture Notes in Control and Inform. Sci., 273, Springer,
Berlin, 2002.

\bibitem{torres1}
D. F. M. Torres, Integrals of motion for discrete-time
optimal control problems. In: Control Applications of Optimisation
2003 (Eds.: R. Bars, and E. Gyurkovics), IFAC Workshop Series,
2003, 33--38.

\bibitem{torres2}
D. F. M. Torres,
Quasi-invariant optimal control problems.
Port. Math. (N.S.) {\bf 61} (2004), no.~1, 97--114.

\bibitem{torres2004}
D. F. M. Torres,
Proper extensions of Noether's symmetry theorem for nonsmooth extremals of the calculus of variations,
Commun. Pure Appl. Anal. {\bf 3} (2004), no.~3, 491--500.

\bibitem{torres2006}
D. F. M. Torres,
A Noether theorem on unimprovable conservation laws for vector-valued
optimization problems in control theory,
Georgian Math. J. {\bf 13} (2006), no.~1, 173--182.

\bibitem{schaft}
A. J. van der Schaft, Symmetries in optimal control, SIAM J Control Optim {\bf 25}
(1987), 245--259.

\end{thebibliography}

\label{lastpage}

\end{document}